\newcommand{\CC}{\mathbb{C}}
\newcommand{\NN}{\mathbb{N}}
\newcommand{\ZZ}{\mathbb{Z}}
\newcommand{\Hc}{\mathcal{H}}
\newcommand{\Oc}{\mathcal{O}}
\newcommand{\set}[1]{\left\{ #1 \right\}}
\newcommand{\setb}[1]{\left( #1 \right)}
\newcommand{\abs}[1]{\left| #1 \right|}
\newcommand{\norm}[1]{\left\lVert #1 \right\rVert}
\newcommand{\gfr}{\mathfrak{g}}
\newcommand{\xbf}{\mathbf{x}}
\newcommand{\bino}[2]{\begin{pmatrix} #1 \\ #2 \end{pmatrix}}
\newtheorem{mymasterthm}{notForUse}%[section]
\theoremstyle{definition}
\theoremstyle{plain}
\newtheorem{mylemma}[mymasterthm]{Lemma}
\newtheorem{mythm}[mymasterthm]{Theorem}
\newtheorem{mycorol}[mymasterthm]{Corollary}
\newtheorem{myprop}[mymasterthm]{Proposition}
\title[Growth of Linear Recurrences in Function Fields]{On the Growth of Linear Recurrences in Function Fields}
\subjclass[2000]{11B37, 11R58, 11J87}
\keywords{Linear recurrences, function fields, $ S $-units}
\author[C. Fuchs]{Clemens Fuchs}
\author[S. Heintze]{Sebastian Heintze}
\thanks{Supported by Austrian Science Fund (FWF): I4406.}
\address{University of Salzburg\newline
	\indent Department of Mathematics\newline
	\indent Hellbrunnerstr. 34 \newline
	\indent A-5020 Salzburg, Austria}
\email{clemens.fuchs@sbg.ac.at, sebastian.heintze@sbg.ac.at}
\begin{document}
	
	\maketitle
	
	\begin{abstract}
		Let $ (G_n)_{n=0}^{\infty} $ be a non-degenerate linear recurrence sequence with power sum representation $ G_n = a_1(n) \alpha_1^n + \cdots + a_t(n) \alpha_t^n $. In this paper we will prove a function field analogue of the well known result that in the number field case, under some non-restrictive conditions, for $ n $ large enough the inequality $ \abs{G_n} \geq \left( \max_{j=1,\ldots,t} \abs{\alpha_j} \right)^{n(1-\varepsilon)} $ holds true.
	\end{abstract}
	
	\section{Introduction}
	
	Let $ (G_n)_{n=0}^{\infty} $ be a non-degenerate linear recurrence sequence with power sum representation $ G_n = a_1(n) \alpha_1^n + \cdots + a_t(n) \alpha_t^n $.
	This expression makes sense for a sequence $ (G_n)_{n=0}^{\infty} $ taking values in any field $ K $; the characteristic roots $ \alpha_i $ as well as the coefficients of the polynomials $ a_i $ then lie in a finite extension $ L $ of $ K $. In this paper $ K $ is either a number field or a function field in one variable of characteristic zero.
	The condition non-degenerate means in the number field case that no ratio $ \alpha_i/\alpha_j $ for $ i \neq j $ is a root of unity and in the function field case that no ratio $ \alpha_i/\alpha_j $ for $ i \neq j $ is contained in the field of constants.
	In the number field case it is well known that, if $ \max_{j=1,\ldots,t} \abs{\alpha_j} > 1 $, then for any $ \varepsilon > 0 $ the inequality
	\begin{equation}
		\label{p3-eq:ineqofinterest}
		\abs{G_n} \geq \left( \max_{j=1,\ldots,t} \abs{\alpha_j} \right)^{n(1-\varepsilon)}
	\end{equation}
	is satisfied for every $ n $ sufficiently large.
	
	The purpose of this paper is to prove an analogous result in the case of a function field in one variable of characteristic zero. Firstly, we will prove a theorem which states an inequality for an arbitrary valuation in the splitting field $ L $ of the characteristic polynomial belonging to the linear recurrence sequence.
	Secondly, we will derive a corollary for the special case of polynomial power sums. In this special case the inequality takes a form very similar to \eqref{p3-eq:ineqofinterest}.
	At this point we remark the following: Theorem 1 in \cite{fuchs-petho-2005} already implies that under some non-degeneracy conditions the degree of polynomials in a linear recurrence sequence with polynomial roots cannot be bounded and therefore must grow up to infinity as $ n $ does. But this theorem does not say how fast it must grow. In the present paper we will give a bound depending on $ n $ for the minimal possible degree of $ G_n $.
	
	Since the number field case is often mentioned, e.g. in \cite{vanderpoorten-1984} or in \cite{evertse-1984} or more recently in \cite{akiyama-evertse-petho-2017} and \cite{bugeaud-kaneko-2019}, but it is not that easy to access a proof of it (cf. \cite{vanderpoorten-schlickewei-1982} or the formulation in \cite{vanderpoorten-1989}), we give a complete proof based on results of Evertse and Schmidt in the appendix.
	By doing so we contribute to the goal that well known facts should be fully accessible with proof following van der Poorten's wise statement in \cite{vanderpoorten-1989} that ``all too frequently, [that] the well known is [often] not generally known, let alone known well''.
	
	\section{Results and notations}
	
	Throughout the paper we denote by $ K $ a function field in one variable over $ \CC $. By $ L $ we usually denote a finite algebraic extension of $ K $.
	For the convenience of the reader we give a short wrap-up of the notion of valuations that can e.g. also be found in \cite{fuchs-heintze-p2}:
	For $ c \in \CC $ and $ f(x) \in \CC(x) $, where $ \CC(x) $ is the rational function field over $ \CC $, we denote by $ \nu_c(f) $ the unique integer such that $ f(x) = (x-c)^{\nu_c(f)} p(x) / q(x) $ with $ p(x),q(x) \in \CC[x] $ such that $ p(c)q(c) \neq 0 $. Further we write $ \nu_{\infty}(f) = \deg q - \deg p $ if $ f(x) = p(x) / q(x) $.
	These functions $ \nu : \CC(x) \rightarrow \ZZ $ are up to equivalence all valuations in $ \CC(x) $.
	If $ \nu_c(f) > 0 $, then $ c $ is called a zero of $ f $, and if $ \nu_c(f) < 0 $, then $ c $ is called a pole of $ f $, where $ c \in \CC \cup \set{\infty} $.
	For a finite extension $ K $ of $ \CC(x) $ each valuation in $ \CC(x) $ can be extended to no more than $ [K : \CC(x)] $ valuations in $ K $. This again gives up to equivalence all valuations in $ K $.
	Both, in $ \CC(x) $ as well as in $ K $ the sum-formula
	\begin{equation*}
		\sum_{\nu} \nu(f) = 0
	\end{equation*}
	holds, where the sum is taken over all valuations in the considered function field.
	Moreover, valuations have the properties $ \nu(fg) = \nu(f) + \nu(g) $ and $ \nu(f+g) \geq \min \set{\nu(f), \nu(g)} $ for all $ f,g \in K $.
	Each valuation in a function field corresponds to a place and vice versa.
	The places can be thought as the equivalence classes of valuations.
	For more information about valuations and places we refer to \cite{stichtenoth-1993}.
	
	For any power sum $ G_n = a_1(n) \alpha_1^n + \cdots + a_t(n) \alpha_t^n $ with $ a_j(n) = \sum_{k=0}^{m_j} a_{jk} n^k $ and any valuation $ \mu $ (in a function field $ L/K $ containing the $ \alpha_j $ and the coefficients of the $ a_j $) we have the trivial bound
	\begin{align*}
		\mu(G_n) &= \mu\left( a_1(n) \alpha_1^n + \cdots + a_t(n) \alpha_t^n \right)
		\geq \min_{j=1,\ldots,t} \mu(a_j(n) \alpha_j^n) \\
		&\geq \min_{j=1,\ldots,t} \mu(a_j(n)) + \min_{j=1,\ldots,t} \mu(\alpha_j^n) \\
		&\geq \min_{j=1,\ldots,t} \min_{k=0,\ldots,m_j} \mu(a_{jk} n^k) + n \cdot \min_{j=1,\ldots,t} \mu(\alpha_j) \\
		&= \min_{\substack{j=1,\ldots,t \\ k=0,\ldots,m_j}} \mu(a_{jk}) + n \cdot \min_{j=1,\ldots,t} \mu(\alpha_j)
		= \widetilde{C} + n \cdot \min_{j=1,\ldots,t} \mu(\alpha_j).
	\end{align*}
	Our main result is now the following theorem which gives a bound in the other direction:
	
	\begin{mythm}
		\label{p3-thm:funcfieldcase}
		Let $ (G_n)_{n=0}^{\infty} $ be a non-degenerate linear recurrence sequence taking values in $ K $ with power sum representation $ G_n = a_1(n) \alpha_1^n + \cdots + a_t(n) \alpha_t^n $. Let $ L $ be the splitting field of the characteristic polynomial of that sequence, i.e. $ L = K(\alpha_1, \ldots, \alpha_t) $. Moreover, let $ \mu $ be a valuation on $ L $. Then there is an effectively computable constant $ C $, independent of $ n $, such that for every sufficiently large $ n $ the inequality
		\begin{equation*}
			\mu(G_n) \leq C + n \cdot \min_{j=1,\ldots,t} \mu(\alpha_j)
		\end{equation*}
		holds.
	\end{mythm}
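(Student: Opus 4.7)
The plan is an induction on the number of characteristic roots $t$ combined with a local analysis at the place $\mu$. For the base case $t=1$, one has $G_n = a_1(n)\alpha_1^n$ and the claim reduces to a uniform upper bound on $\mu(a_1(n))$; writing $a_1(n) = \sum_k a_{1k}n^k$ and setting $m := \min_k \mu(a_{1k})$, extracting the appropriate power of a local uniformizer $\pi$ at $\mu$ shows that the residue of $\pi^{-m}a_1(n)$ is a nonzero polynomial in $n$ over the residue field (its coefficients are precisely the nonzero residues of those $a_{1k}$ attaining the minimum valuation). Such a polynomial has only finitely many integer roots, so $\mu(a_1(n)) = m$ for all $n$ past a threshold.

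For the inductive step, reorder so that $\mu(\alpha_1) = m := \min_j \mu(\alpha_j)$, and set $J := \{j : \mu(\alpha_j) = m\}$. Terms with $j \notin J$ have $\mu$-valuation at least $n(m+1) + O(1)$, which exceeds the target bound $nm + C$ for large $n$, so they may be absorbed into the estimate; if $|J| < t$, the inductive hypothesis applied to the partial sum over $J$ concludes. Otherwise $|J| = t$, and after dividing by $\alpha_1^n$ it suffices to bound $\mu(H_n)$ for $H_n := \sum_j a_j(n)\beta_j^n$, where $\beta_j := \alpha_j/\alpha_1$ is a $\mu$-unit with $\beta_1 = 1$ and, by non-degeneracy, the $\beta_j$ with $j \neq 1$ are non-constant elements of $L$.

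Reducing $H_n$ modulo $\mathfrak{m}_\mu$ (after normalizing by a suitable power of $\pi$) yields a power sum $\sum_j \bar{a}_j(n)\bar\beta_j^n$ in the residue field $\CC$. If this residue is nonzero for large $n$, then $\mu(H_n)$ is uniformly bounded and we are done. Otherwise, coincidences $\bar\beta_i = \bar\beta_j$ together with roots of unity among the ratios $\bar\beta_i/\bar\beta_j$ allow the residue sum to vanish systematically; passing to an arithmetic progression $n = r + dk$ (with $d$ the lcm of the relevant orders) and regrouping terms reduces to the subcase where all the surviving residues coincide. In that subcase, write $\beta_j = \beta_{j_0}(1 + \pi \delta_j)$ and expand $\beta_j^n$ binomially in powers of $\pi$; non-degeneracy forces the $\delta_j$ to be pairwise distinct in $L$ (otherwise $\beta_i = \beta_j$, hence $\alpha_i = \alpha_j$), and a Vandermonde-type argument then produces a first depth $r \leq t-1$ at which the coefficient of $\pi^r$ in the expansion of $H_n$ is a nonzero polynomial in $n$, giving $\mu(H_n) \leq r + O(1)$.

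The main obstacle is managing the cascade of reductions through residue coincidences, arithmetic progressions, and binomial expansions: one must preserve the non-degeneracy hypothesis at each stage of grouping and ensure that the depth of the $\pi$-adic iteration is bounded by a quantity depending only on the original data $(t,\alpha_j,a_{jk},\mu)$ and not on $n$. The effectiveness of $C$ is then inherited from the effectiveness at each step (degree bounds on residue polynomials, sizes of local expansions, and orders of finitely many roots of unity).
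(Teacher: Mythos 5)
Your first two reductions are sound: the base case $t=1$ via the residue of $\pi^{-m}a_1(n)$ works, and discarding the roots with $\mu(\alpha_j)>\min_i\mu(\alpha_i)$ by the ultrametric inequality plus induction on $t$ legitimately reduces the problem to the case where all $\beta_j=\alpha_j/\alpha_1$ are $\mu$-units, i.e.\ to showing $\mu(H_n)\leq C$ for $H_n=\sum_j a_j(n)\beta_j^n$. The genuine gap is in the last step, which is where the whole content of the theorem lives: you must show that the first $\pi$-adic depth at which the expansion of $H_n$ acquires a coefficient that is nonzero for large $n$ is bounded independently of $n$, and the ``Vandermonde-type argument'' with claimed depth $r\leq t-1$ does not establish this. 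Concretely: (i) the bound $r\leq t-1$ is false --- take $K=L=\CC(x)$, $\mu=\nu_0$, $G_n=(1+x^{100})^n-1^n$, which is non-degenerate with $t=2$ and $\min_j\mu(\alpha_j)=0$, yet $\mu(G_n)=100$ for all $n\geq 1$; the admissible depth must depend on quantities like $\mu(\beta_i-\beta_j)$ and $\mu(a_{jk})$, not on $t$ alone. (ii) More seriously, writing $\beta_j=\beta_{j_0}(1+\pi\delta_j)$ (the deviation need not have valuation exactly $1$, but set that aside) and extracting the coefficient of $\pi^r$ yields expressions of the form $\binom{n}{r}\sum_j a_j(n)\delta_j^r$, which live in $L[n]$, not in $\CC[n]$. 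A Vandermonde argument in the $\delta_j$ shows some such expression is a \emph{nonzero element of $L$}, but bounding the $\mu$-valuation of a nonzero element of $L$ built from the data is exactly the problem you started with, so the step is circular: in the example above the coefficient of $\pi^1$ is $nx^{99}$, nonzero in $L$ but contributing only at depth $100$. (iii) The reduction to ``all surviving residues coincide'' via an arithmetic progression is not available when some ratio $\bar\beta_i/\bar\beta_j$ is neither $1$ nor a root of unity; those residue classes remain separated and one then needs a zero bound for the residual exponential polynomial over $\CC$.

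The depth-boundedness can probably be rescued --- for instance, the Casoratian matrix $\left((n+i)^l\beta_j^{n+i}\right)$ is invertible with determinant of bounded valuation because the $\beta_j$ are $\mu$-units with pairwise non-constant ratios, which gives $\min_{0\leq i<N}\mu(H_{n+i})\leq\min_{j,l}\mu(a_{jl})+O(1)$; but upgrading ``one of $N$ consecutive terms'' to ``every sufficiently large $n$'' still requires handling vanishing along arithmetic progressions, and none of this is in your sketch. The paper sidesteps the entire local analysis: it applies Zannier's function field subspace theorem (Proposition \ref{p3-prop:functionfieldsubspace}) to the summands $b_{ji}(n)\pi_{ji}\alpha_j^n$, whose $\CC$-linear independence for large $n$ comes from the Fuchs--Peth\H{o} bound (Proposition \ref{p3-prop:luorbounded}), obtaining $\mu(G_n)\leq C_1+\min\mu(\text{summand})$ in one stroke. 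You should either carry out the depth bound rigorously or adopt that route.
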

	
	For the special case of a linear recurrence sequence of complex polynomials having complex polynomials as characteristic roots we get the following lower bound for the degree of the $ n $-th member of the sequence:
	
	\begin{mycorol}
		\label{p3-corol:polypowersum}
		Let $ (G_n)_{n=0}^{\infty} $ be a non-degenerate linear recurrence sequence of polynomials in $ \CC[x] $ with power sum representation $ G_n = a_1(n) \alpha_1^n + \cdots + a_t(n) \alpha_t^n $ such that $ \alpha_1, \ldots, \alpha_t \in \CC[x] $. Then there is an effectively computable constant $ C $, independent of $ n $, such that for every sufficiently large $ n $ the inequality
		\begin{equation*}
			\deg G_n \geq n \cdot \max_{j=1,\ldots,t} \deg \alpha_j - C
		\end{equation*}
		holds.
	\end{mycorol}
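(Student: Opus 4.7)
My plan is to derive Corollary \ref{p3-corol:polypowersum} as an almost immediate consequence of Theorem \ref{p3-thm:funcfieldcase} applied to the valuation at infinity on $\CC(x)$. First I would note that, in the setting of the corollary, $K = \CC(x)$ and, because $\alpha_1, \ldots, \alpha_t \in \CC[x] \subset \CC(x)$, the splitting field $L = K(\alpha_1, \ldots, \alpha_t)$ simply equals $K$. Thus every valuation on $L$ is one of the $\nu_c$ described in the notations section, and in particular $\nu_\infty$ is available as a choice of $\mu$.

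The key identity I would then use is $\nu_\infty(f) = -\deg f$ for every nonzero $f \in \CC[x]$, which is immediate from the definition of $\nu_\infty$ upon taking $p = f$ and $q = 1$. Applying Theorem \ref{p3-thm:funcfieldcase} with $\mu = \nu_\infty$ yields
\begin{equation*}
\nu_\infty(G_n) \leq C + n \cdot \min_{j=1,\ldots,t} \nu_\infty(\alpha_j)
\end{equation*}
for all sufficiently large $n$. Multiplying by $-1$ and using $\min_{j}(-\deg \alpha_j) = -\max_{j} \deg \alpha_j$ then turns this into the desired bound
\begin{equation*}
\deg G_n \geq n \cdot \max_{j=1,\ldots,t} \deg \alpha_j - C,
\end{equation*}
after possibly renaming the constant.

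The only subtlety worth mentioning is that $\nu_\infty(G_n)$ must be a finite integer for the theorem to apply, i.e. $G_n \neq 0$; but for a non-degenerate linear recurrence in characteristic zero the vanishing $G_n = 0$ happens only finitely often, which can safely be absorbed into the "sufficiently large $n$" threshold. For this reason I do not foresee any substantive obstacle here: the corollary is essentially a translation of the main theorem from the language of valuations into the language of polynomial degrees, via the infinite place and a sign change.
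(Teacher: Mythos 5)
Your proposal is correct and coincides with the paper's own proof: both apply Theorem \ref{p3-thm:funcfieldcase} with $L = K = \CC(x)$ and $\mu = \nu_\infty$, then use $\nu_\infty(f) = -\deg f$ and the identity $\min_j(-\deg\alpha_j) = -\max_j \deg\alpha_j$ to conclude. Your extra remark about absorbing the finitely many $n$ with $G_n = 0$ into the ``sufficiently large $n$'' threshold is a reasonable point of care that the paper leaves implicit.
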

	
	It would be interesting to also prove a function field variant of Corollary 3.1 in \cite{akiyama-evertse-petho-2017}. However, because of Lemma 3.1 therein, which is based on Dirichlet's classical approximation theorem, we are not (yet) able to prove such a statement.
	
	In the proof given in the next section we are going to apply the subsequently given special case of Theorem 1 in \cite{fuchs-petho-2005}:
	\begin{myprop}
		\label{p3-prop:luorbounded}
		Let $ K $ be as above and $ L $ be a finite extension of $ K $ of genus $ \gfr $. Let further $ \alpha_1, \ldots, \alpha_d \in L^* $ with $ d \geq 2 $ be such that $ \alpha_i / \alpha_j \notin \CC^* $ for each pair of subscripts $ i,j $ with $ 1 \leq i < j \leq d $. Moreover, for every $ i = 1, \ldots, d $ let $ \pi_{i1}, \ldots, \pi_{ir_i} \in L $ be $ r_i $ linearly independent elements over $ \CC $. Put
		\begin{equation*}
			q = \sum_{i=1}^{d} r_i.
		\end{equation*}
		Then for every $ n \in \NN $ such that
		\begin{equation*}
			\set{ \pi_{il} \alpha_i^n : l=1,\ldots,r_i, i=1,\ldots,d }
		\end{equation*}
		is linearly dependent over $ \CC $, but no proper subset of this set is linearly dependent over $ \CC $, we have
		\begin{equation*}
			n \leq C = C(q, \gfr, \pi_{il}, \alpha_i : l=1,\ldots,r_i, i=1,\ldots,d).
		\end{equation*}
	\end{myprop}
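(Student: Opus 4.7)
\emph{Approach.} My plan is to reduce a minimal $\CC$-linear dependence of the $\pi_{il}\alpha_i^n$ to a non-degenerate $S$-unit equation in $L$ and then invoke the Brownawell--Masser / Evertse bound on such equations over a function field of genus $\gfr$.

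A minimal dependence furnishes scalars $c_{il}\in\CC$, all non-zero (a vanishing coefficient would give a dependence on a proper subfamily), with $\sum_{i,l} c_{il}\pi_{il}\alpha_i^n=0$; by the same minimality, no proper sub-sum of this expression can vanish either. Dividing by the $(1,1)$-term rewrites the relation as
\begin{equation*}
\sum_{(i,l)\ne (1,1)} \frac{c_{il}\,\pi_{il}}{c_{11}\,\pi_{11}}\Bigl(\frac{\alpha_i}{\alpha_1}\Bigr)^n=-1,
\end{equation*}
a sum of $q-1$ terms in $L$ with no vanishing sub-sum. Let $S$ be the finite set of places of $L$ at which some $\pi_{il}$ or some $\alpha_i$ has a zero or pole; then every summand is an $S$-unit (the constants $c_{il}\in\CC^*$ are units at every place). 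The Brownawell--Masser / Evertse theorem for function-field $S$-unit equations now yields an effectively computable bound
\begin{equation*}
\max_{(i,l)\ne(1,1)} H\!\Bigl( (\pi_{il}/\pi_{11})(\alpha_i/\alpha_1)^n \Bigr) \le C_0(q,\abs{S},\gfr),
\end{equation*}
where $H$ denotes the projective height in $L$ and the bound is independent of $n$.

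To extract a bound on $n$, pick any $i\ge 2$ (possible since $d\ge 2$); by assumption $\alpha_i/\alpha_1\notin\CC^*$, so $H(\alpha_i/\alpha_1)>0$, and the standard height inequality $H(xy^n)\ge n\,H(y)-H(x)$ gives
\begin{equation*}
H\!\bigl((\pi_{i1}/\pi_{11})(\alpha_i/\alpha_1)^n\bigr)\ge n\,H(\alpha_i/\alpha_1)-H(\pi_{i1}/\pi_{11}),
\end{equation*}
which grows linearly in $n$ with positive slope depending only on the fixed data. Combined with the bound above, this forces $n\le C(q,\gfr,\pi_{il},\alpha_i)$, as required.

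\emph{Main obstacle.} Almost all of the real work is absorbed into the unit-equation theorem, whose effective function-field version (Brownawell--Masser, later sharpened by Evertse) is on the shelf. The remaining care is bookkeeping: checking that the minimality hypothesis indeed rules out vanishing sub-sums (not merely vanishing subfamilies), that $\abs{S}$ can be controlled in terms of the given $\pi_{il},\alpha_i$, and that the linear-in-$n$ lower bound on the heights of the summands has a strictly positive leading coefficient. The last is where the hypothesis $\alpha_i/\alpha_j\notin\CC^*$ is genuinely used; without it the unit equation could be of bounded height for every $n$ and no bound on $n$ would be available.
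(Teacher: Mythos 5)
The paper does not actually prove this proposition: it is imported as a quoted special case of Theorem~1 of Fuchs--Peth\H{o} \cite{fuchs-petho-2005}, so there is no in-paper argument to compare against. Your derivation --- minimality forces every coefficient of the dependence relation to be non-zero and rules out vanishing sub-sums, the normalized relation becomes an $S$-unit equation in $q-1$ terms whose summands have height bounded in terms of $q$, $\abs{S}$ and $\gfr$ by the Brownawell--Masser/Zannier unit-equation bound, and the hypothesis $\alpha_i/\alpha_1\notin\CC^*$ makes the height of the $(i,1)$-term grow linearly in $n$ with positive slope --- is correct and is essentially the argument used in the cited source; the only point worth making explicit is that in the degenerate case where all summands are constant the height inequality $n\,\Hc(\alpha_i/\alpha_1)\leq\Hc(\pi_{i1}/\pi_{11})$ already bounds $n$, so the ``not all constant'' proviso in some formulations of the unit-equation theorem causes no trouble.
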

	
	The proof will also make use of height functions in function fields. Let us therefore define the height of an element $ f \in L^* $ by
	\begin{equation*}
		\Hc(f) := - \sum_{\nu} \min \setb{0, \nu(f)} = \sum_{\nu} \max \setb{0, \nu(f)}
	\end{equation*}
	where the sum is taken over all valuations in the function field $ L / \CC $.
	Observe that for every $ z \in L \setminus \CC $ we have $ \Hc(z) = \sum_{\nu} \max \setb{0, \nu(z)} = \sum_{P} \max \setb{0, \nu_P(z)} = \deg \sum_{P} \max \setb{0, \nu_P(z)} P = \deg (z)_0 = [L:\CC(z)] = \deg_{\CC}(z) $, by Theorem I.4.11 in \cite{stichtenoth-1993}, where we have used that all places have degree one since we are working over $ \CC $; therefore instead of the height one can use $ \deg_{\CC}(z) = [L:\CC(z)]$ (as in \cite{zannier-2008}).
	Additionally we define $ \Hc(0) = \infty $.
	This height function satisfies some basic properties that are listed in the lemma below which is proven in \cite{fuchs-karolus-kreso-2019}:
	
	\begin{mylemma}
		\label{p3-lemma:heightproperties}
		Denote as above by $ \Hc $ the height on $ L/\CC $. Then for $ f,g \in L^* $ the following properties hold:
		\begin{enumerate}[a)]
			\item $ \Hc(f) \geq 0 $ and $ \Hc(f) = \Hc(1/f) $,
			\item $ \Hc(f) - \Hc(g) \leq \Hc(f+g) \leq \Hc(f) + \Hc(g) $,
			\item $ \Hc(f) - \Hc(g) \leq \Hc(fg) \leq \Hc(f) + \Hc(g) $,
			\item $ \Hc(f^n) = \abs{n} \cdot \Hc(f) $,
			\item $ \Hc(f) = 0 \iff f \in \CC^* $,
			\item $ \Hc(A(f)) = \deg A \cdot \Hc(f) $ for any $ A \in \CC[T] \setminus \set{0} $.
		\end{enumerate}
	\end{mylemma}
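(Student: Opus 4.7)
The plan is to verify each of the six assertions directly from the definition $\Hc(f) = \sum_\nu \max(0, \nu(f))$, using only the valuation axioms $\nu(fg) = \nu(f) + \nu(g)$ and $\nu(f+g) \geq \min(\nu(f), \nu(g))$ together with the sum formula $\sum_\nu \nu(f) = 0$; no deeper tool is needed.

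I would begin with a), where positivity is immediate from the non-negativity of each summand, and the identity $\Hc(f) = \Hc(1/f)$ drops out of the sum formula via $\sum_\nu \max(0, \nu(f)) = -\sum_\nu \min(0, \nu(f)) = \sum_\nu \max(0, \nu(1/f))$. For parts b) and c), the key pointwise inequalities are $\min(0, \nu(f+g)) \geq \min(0, \nu(f)) + \min(0, \nu(g))$ and $\min(0, \nu(fg)) \geq \min(0, \nu(f)) + \min(0, \nu(g))$; both are verified by a short case distinction on the signs of $\nu(f)$ and $\nu(g)$. Summing over $\nu$ then gives the upper bounds $\Hc(f+g) \leq \Hc(f) + \Hc(g)$ and $\Hc(fg) \leq \Hc(f) + \Hc(g)$. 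The corresponding lower bounds follow from the standard telescoping trick: $\Hc(f) = \Hc((f+g) + (-g)) \leq \Hc(f+g) + \Hc(g)$ and, analogously, $\Hc(f) \leq \Hc(fg) + \Hc(g^{-1}) = \Hc(fg) + \Hc(g)$, invoking a) in the last equality.

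Parts d) and e) are short. For d), $\nu(f^n) = n \nu(f)$ handles $n \geq 0$ summand-by-summand, and a) takes care of $n < 0$. For e), if $\Hc(f) = 0$ then $\nu(f) \geq 0$ for every $\nu$, and the sum formula forces $\nu(f) = 0$ everywhere, so $f$ has neither zeros nor poles and hence lies in $\CC^*$; the converse is trivial.

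The step requiring the most care is f), and it is the only place I expect a genuine obstacle. My approach is to factor $A(T) = c \prod_{i=1}^{d} (T - \beta_i)$ over $\CC$ and observe that for any valuation $\nu$ of $L$ with $\nu(f) \geq 0$ one has $\nu(f - \beta_i) \geq 0$ for each $i$, and hence $\nu(A(f)) \geq 0$, while for $\nu$ with $\nu(f) < 0$ the equality $\nu(f - \beta_i) = \nu(f)$ gives $\nu(A(f)) = d \cdot \nu(f)$. Summing over all valuations then yields $\Hc(A(f)) = -\sum_{\nu(f) < 0} d \cdot \nu(f) = d \cdot \Hc(f)$. Alternatively, one can invoke the identity $\Hc(z) = [L:\CC(z)]$ recorded just before the lemma statement, together with the multiplicativity $[L:\CC(A(f))] = [L:\CC(f)] \cdot [\CC(f):\CC(A(f))]$ and the classical fact $[\CC(f):\CC(A(f))] = \deg A$. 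Either route is entirely mechanical once the pole structure is matched correctly, which is the one place where a little bookkeeping, rather than a new idea, is required.
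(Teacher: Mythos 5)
Your verification is correct: every part follows, as you show, from the definition of $\Hc$, the valuation axioms, and the sum formula, with the telescoping trick handling the lower bounds in b) and c) and the factorization of $A$ over $\CC$ handling f). The paper itself gives no proof of this lemma but defers to \cite{fuchs-karolus-kreso-2019}, where the argument is the same elementary one you carry out; the only caveat worth recording is that in b) and f) one must implicitly exclude the degenerate cases $f+g=0$ and $A(f)=0$ (the latter forcing $f\in\CC$), since $\Hc(0)=\infty$ by convention.
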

	
	Furthermore we will use the following function field analogue of the Schmidt subspace theorem. A proof can be found in \cite{zannier-2008}:
	\begin{myprop}[Zannier]
		\label{p3-prop:functionfieldsubspace}
		Let $ F/\CC $ be a function field in one variable, of genus $ \gfr $, let $ \varphi_1, \ldots, \varphi_n \in F $ be linearly independent over $ \CC $ and let $ r \in \set{0,1, \ldots, n} $. Let $ S $ be a finite set of places of $ F $ containing all the poles of $ \varphi_1, \ldots, \varphi_n $ and all the zeros of $ \varphi_1, \ldots, \varphi_r $. Put $ \sigma = \sum_{i=1}^{n} \varphi_i $. Then
		\begin{equation*}
			\sum_{\nu \in S} \left( \nu(\sigma) - \min_{i=1, \ldots, n} \nu(\varphi_i) \right) \leq \bino{n}{2} (\abs{S} + 2\gfr - 2) + \sum_{i=r+1}^{n}  \Hc(\varphi_i).
		\end{equation*}
	\end{myprop}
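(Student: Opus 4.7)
The plan is to prove this function-field subspace inequality by a Wronskian argument in the spirit of Brownawell--Masser and Zannier. Fix a nontrivial $\CC$-derivation $D\colon F \to F$; since $F/\CC$ has transcendence degree one and $\CC$ is algebraically closed, such a $D$ exists and its field of constants is exactly $\CC$. Because $\varphi_1,\ldots,\varphi_n$ are $\CC$-linearly independent, the Wronskian
\[
W := \det\bigl(D^{i-1}\varphi_j\bigr)_{1 \le i,j \le n} \in F^{*}
\]
is therefore nonzero.

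The first key manoeuvre is a column trick: adding columns $1,\ldots,n-1$ to the $n$-th column of the Wronskian matrix turns that column into $(\sigma, D\sigma, \ldots, D^{n-1}\sigma)^{\top}$, so
\[
W = \det\bigl(D^{i-1}\varphi_1, \ldots, D^{i-1}\varphi_{n-1}, D^{i-1}\sigma\bigr)_{1 \le i \le n}.
\]
Expanding this determinant along its last column writes $W$ as a $\CC$-linear combination of $\sigma, D\sigma, \ldots, D^{n-1}\sigma$ with coefficients that are $(n-1)\times(n-1)$ sub-Wronskians of $\varphi_1,\ldots,\varphi_{n-1}$.

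The heart of the proof is then a local lower bound for $\nu(W)$ at each place $\nu$ of $F$. Using a uniformizer $t$ at $\nu$ and writing $D = u\cdot d/dt$ with $u \in F$, one first notes that $\nu(Df) \ge \nu(f) - 1 + \nu(u)$, so iterated derivatives lose at most one order per application plus a $\nu(u)$-correction at ramification places of $D$. Factoring $\varphi_j$ out of the $j$-th column of the Wronskian (which replaces the lower entries of that column by polynomials in logarithmic derivatives of $\varphi_j$) and combining with the column-trick expansion yields a pointwise estimate of the shape
\[
\nu(W) \ge \sum_{j=1}^{n}\nu(\varphi_j) + \bigl(\nu(\sigma) - \min_{1\le i\le n}\nu(\varphi_i)\bigr) - \bino{n}{2}\lambda_\nu,
\]
where the nonnegative local correction $\lambda_\nu$ is trivial at a generic place, picks up one unit at each place of $S$, and carries an additional ramification contribution that, summed over all places of $F$, equals $2\gfr - 2$ by Riemann--Hurwitz applied to the differential $dt$.

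To finish, I would sum this local inequality over all valuations of $F$ and invoke the product formula $\sum_\nu \nu(W) = 0 = \sum_\nu \nu(\varphi_j)$ to kill the $\sum_j \nu(\varphi_j)$ term. For $\nu \notin S$ the quantity $\nu(\sigma) - \min_i \nu(\varphi_i)$ is nonnegative, and since $\varphi_1,\ldots,\varphi_r$ have no zeros or poles outside $S$ by hypothesis, only the poles and zeros of $\varphi_{r+1},\ldots,\varphi_n$ outside $S$ can contribute; a direct comparison shows that their total contribution is bounded by $\sum_{i=r+1}^{n}\Hc(\varphi_i)$. After rearranging, the claimed inequality falls out. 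The principal obstacle is the delicate local Wronskian valuation analysis, especially the separation of the roles of the $r$ ``$S$-unit'' indices from the remaining $n-r$ ``free'' ones, since that asymmetry is precisely what distinguishes the combinatorial term $\bino{n}{2}(|S|+2\gfr-2)$ from the genuine height term $\sum_{i>r}\Hc(\varphi_i)$ on the right.
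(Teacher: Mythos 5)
First, a point of comparison: the paper does not prove this proposition at all --- it is quoted from Zannier's paper \cite{zannier-2008} and used as a black box, so there is no in-paper argument to measure yours against. Your Wronskian outline is nonetheless the strategy of Zannier's actual proof (the column operation replacing one column by $\sigma$ and its derivatives, local valuation estimates for the Wronskian, the sum formula, and Riemann--Hurwitz producing the $2\gfr-2$), so the route you chose is the right one.

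However, as written your central displayed ``local inequality'' is not correct, and the defect sits exactly at the point that separates the two terms on the right-hand side of the proposition. You assert $\nu(W)\ge\sum_j\nu(\varphi_j)+\bigl(\nu(\sigma)-\min_i\nu(\varphi_i)\bigr)-\bino{n}{2}\lambda_\nu$ with $\lambda_\nu$ vanishing at a generic place outside $S$. But at a place $\nu\notin S$ where some $\varphi_i$ with $i>r$ has a zero (such places are allowed by the hypotheses), the logarithmic derivatives $D\varphi_i/\varphi_i$ acquire poles, so $\lambda_\nu$ cannot be taken to be $0$ there; and if you repair this by letting $\lambda_\nu$ absorb those zeros, then summing $\bino{n}{2}\lambda_\nu$ over all places produces $\bino{n}{2}\sum_{i=r+1}^{n}\Hc(\varphi_i)$ rather than $\sum_{i=r+1}^{n}\Hc(\varphi_i)$. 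Your closing sentence (``a direct comparison shows that their total contribution is bounded by $\sum_{i>r}\Hc(\varphi_i)$'') therefore does not follow from your own local estimate. To obtain the coefficient $1$ you need a genuinely sharper bound at those places: with $t$ a uniformizer at $\nu$ one has $\nu\bigl((d/dt)^{k}\varphi\bigr)\ge\max\set{0,\nu(\varphi)-k}$ whenever $\nu(\varphi)\ge0$, so each permutation term of the determinant loses at most $\sum_{j}\min\set{\pi(j)-1,\nu(\varphi_j)}\le\sum_{j>r}\nu(\varphi_j)$ (using $\nu(\varphi_j)=0$ for $j\le r$), and summing this loss over $\nu\notin S$ is bounded by $\sum_{i=r+1}^{n}\Hc(\varphi_i)$ with coefficient $1$, while the cruder bound $\sum_j(\pi(j)-1)=\bino{n}{2}$ is reserved for the finitely many places of $S$. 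That refinement is the actual content of the proposition; without it your outline yields only the weaker inequality with $\bino{n}{2}\sum_{i=r+1}^{n}\Hc(\varphi_i)$.
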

	
	\section{Proofs}
	
	\begin{proof}[Proof of Theorem \ref{p3-thm:funcfieldcase}]
		Denote the coefficients of the polynomial $ a_j(n) \in L[n] $ by $ a_{j0}, a_{j1}, \ldots, a_{jm_j} $ where $ m_j $ is the degree of $ a_j(n) $. So
		\begin{equation*}
			a_j(n) = \sum_{k=0}^{m_j} a_{jk} n^k.
		\end{equation*}
		
		First assume that the recurrence sequence is of the shape $ G_n = a_1(n) \alpha_1^n $. Using Lemma \ref{p3-lemma:heightproperties} we get
		\begin{align*}
			\mu(G_n) &= \mu(a_1(n)) + n\mu(\alpha_1) \leq \Hc(a_1(n)) + n\mu(\alpha_1) \\
			&\leq \sum_{k=0}^{m_1} \Hc(a_{1k}n^k) + n\mu(\alpha_1) = \sum_{k=0}^{m_1} \Hc(a_{1k}) + n\mu(\alpha_1).
		\end{align*}
		
		Thus from now on we can assume that $ t \geq 2 $.
		Let $ \pi_{j1}, \ldots, \pi_{jk_j} $ be a maximal $ \CC $-linear independent subset of $ a_{j0}, a_{j1}, \ldots, a_{jm_j} $. Then we can write the sequence in the form
		\begin{equation*}
			G_n = \sum_{j=1}^{t} \left( \sum_{i=1}^{k_j} b_{ji}(n) \pi_{ji} \right) \alpha_j^n
		\end{equation*}
		with polynomials $ b_{ji}(n) \in \CC[n] $.
		Since $ a_j(n) $ is not the zero polynomial, there is for each $ j $ at least one index $ i $ such that $ b_{ji}(n) $ is not the zero polynomial.
		Without loss of generality we can assume that no $ b_{ji}(n) $ is the zero polynomial since otherwise we can throw out all zero polynomials and renumber the remaining terms. It does not matter whether all $ \pi_{ji} $ occur in the sum or not.
		Moreover we assume that $ n $ is large enough such that $ b_{ji}(n) \neq 0 $ for all $ j,i $.
		
		Consider as a next step the set
		\begin{equation*}
			M := \set{\pi_{ji} \alpha_j^n : i=1,\ldots,k_j, j=1,\ldots,t}.
		\end{equation*}
		We intend to apply Proposition \ref{p3-prop:luorbounded}.
		If $ M $ is linearly dependent over $ \CC $, then we choose a minimal linear dependent subset $ \widetilde{M} $ of $ M $, i.e. a linearly dependent subset $ \widetilde{M} $ with the property that no proper subset of $ \widetilde{M} $ is linearly dependent. Let $ \widetilde{G_n} $ be the linear recurrence sequence associated with this subset $ \widetilde{M} $, that is
		\begin{equation*}
			\widetilde{G_n} = \sum_{j=1}^{s} \left( \sum_{i=1}^{\widetilde{k_j}} b_{ji}(n) \pi_{ji} \right) \alpha_j^n
		\end{equation*}
		for $ s \leq t $ and after a suitable renumbering of the summands. Since $ \pi_{j1}, \ldots, \pi_{jk_j} $ are $ \CC $-linear independent we have $ s \geq 2 $.
		Applying Proposition \ref{p3-prop:luorbounded} to
		\begin{equation*}
			\widetilde{M} := \set{\pi_{ji} \alpha_j^n : i=1,\ldots,\widetilde{k_j}, j=1,\ldots,s}
		\end{equation*}
		gives an upper bound for $ n $. Thus for $ n $ large enough this subset $ \widetilde{M} $ of $ M $ cannot be linearly dependent.
		Because of the fact that there are only finitely many subsets of $ M $, therefore for $ n $ large enough the set $ M $ must be linearly independent.
		
		We assume from here on that $ n $ is large enough such that $ M $ is linearly independent.
		For each fixed $ n $ we have $ b_{ji}(n) \in \CC^* $. Thus the set
		\begin{equation*}
			M' := \set{b_{ji}(n) \pi_{ji} \alpha_j^n : i=1,\ldots,k_j, j=1,\ldots,t}.
		\end{equation*}
		is linearly independent over $ \CC $ and contains for each $ j=1,\ldots,t $ at least one element.
		Let $ S $ be a finite set of places of $ L $ containing all zeros and poles of $ \alpha_j $ for $ j=1,\ldots,t $ and of the nonzero $ a_{ji} $ for $ j=1,\ldots,t $ and $ i=1,\ldots,m_j $ as well as $ \mu $ and the places lying over $ \infty $.
		Now applying Proposition \ref{p3-prop:functionfieldsubspace} yields
		\begin{equation*}
			\sum_{\nu \in S} \left( \nu(G_n) - \min_{\substack{j=1,\ldots,t \\ i=1,\ldots,k_j}} \nu\left( b_{ji}(n) \pi_{ji} \alpha_j^n \right) \right) \leq \bino{\sum_{j=1}^{t} k_j}{2} (\abs{S} + 2\gfr - 2) =: C_1
		\end{equation*}
		and since each summand in the sum on the left hand side is non-negative
		\begin{equation*}
			\mu (G_n) - \min_{\substack{j=1,\ldots,t \\ i=1,\ldots,k_j}} \mu\left( b_{ji}(n) \pi_{ji} \alpha_j^n \right) \leq C_1.
		\end{equation*}
		Therefore for all $ j_0=1,\ldots,t $ and $ i_0=1,\ldots,k_{j_0} $ we get
		\begin{align*}
			\mu (G_n) &\leq C_1 + \min_{\substack{j=1,\ldots,t \\ i=1,\ldots,k_j}} \mu\left( b_{ji}(n) \pi_{ji} \alpha_j^n \right) \\
			&\leq C_1 + \mu\left( b_{j_0i_0}(n) \pi_{j_0i_0} \alpha_{j_0}^n \right) \\
			&= C_1 + \mu\left( \pi_{j_0i_0} \right) + n \mu\left( \alpha_{j_0} \right) \\
			&\leq C_1 + \max_{\substack{j=1,\ldots,t \\ i=0,\ldots,m_j ,\ a_{ji} \neq 0}} \mu(a_{ji}) + n \mu\left( \alpha_{j_0} \right) \\
			&\leq C_1 + \max_{\substack{j=1,\ldots,t \\ i=0,\ldots,m_j ,\ a_{ji} \neq 0}} \Hc(a_{ji}) + n \mu\left( \alpha_{j_0} \right) \\
			&= C_2 + n \mu\left( \alpha_{j_0} \right).
		\end{align*}
		Since this holds for all $ j_0=1,\ldots,t $ we have
		\begin{equation*}
			\mu (G_n) \leq C_2 + n \cdot \min_{j=1,\ldots,t} \mu(\alpha_j).
		\end{equation*}
	\end{proof}
	
	\begin{proof}[Proof of Corollary \ref{p3-corol:polypowersum}]
		We can apply Theorem \ref{p3-thm:funcfieldcase} with $ L = K = \CC(x) $ and $ \mu = \nu_{\infty} $. This yields
		\begin{align*}
			-\deg G_n &= \nu_{\infty} (G_n) \leq C + n \cdot \min_{j=1,\ldots,t} \nu_{\infty} (\alpha_j) \\
			&= C - n \cdot \max_{j=1,\ldots,t} \deg \alpha_j
		\end{align*}
		which immediately implies the inequality in question.
	\end{proof}
	
%	\appendix
	
	\section*{Appendix. The number field case}
	
	In this appendix we will give a proof for the following theorem:
	\begin{mythm}
		\label{p3-thm:numfieldcase}
		Let $ (G_n)_{n=0}^{\infty} $ be a non-degenerate linear recurrence sequence taking values in a number field $ K $ and let $ G_n = a_1(n) \alpha_1^n + \cdots + a_t(n) \alpha_t^n $ with algebraic integers $ \alpha_1,\ldots,\alpha_t $ be its power sum representation satisfying $ \max_{j=1,\ldots,t} \abs{\alpha_j} > 1 $. Denote with $ \abs{\cdot} $ the usual absolute value on $ \CC $. Then for any $ \varepsilon > 0 $ the inequality
		\begin{equation*}
			\abs{G_n} \geq \left( \max_{j=1,\ldots,t} \abs{\alpha_j} \right)^{n(1-\varepsilon)}
		\end{equation*}
		is satisfied for every $ n $ sufficiently large.
	\end{mythm}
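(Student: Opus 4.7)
The plan is to argue by contradiction via Schmidt's Subspace Theorem, in the sharp $S$-unit form due to Evertse---precisely the route alluded to by the authors. Suppose toward contradiction that there exist $\varepsilon > 0$ and an infinite set $\mathcal{N} \subseteq \NN$ with $\abs{G_n} < A^{n(1-\varepsilon)}$ for all $n \in \mathcal{N}$, where $A := \max_j \abs{\alpha_j} > 1$. After fixing an embedding of $K$ into $\CC$ we may assume $\abs{\alpha_1} = A$, and denote the corresponding archimedean place by $v_0$. Enlarge $K$ (if necessary) so that it contains all $\alpha_j$ and all coefficients $a_{jk}$ of $a_j(n) = \sum_k a_{jk} n^k$, and fix a finite set $S$ of places of $K$ containing the archimedean ones, all places where some $\alpha_j$ or some $a_{jk}$ fails to be integral, and all places dividing the algebraic norm of some $\alpha_j$. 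Then every $\alpha_j$ is an $S$-unit and every $a_{jk}$ is an $S$-integer.

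Apply the Subspace Theorem to the vectors $\mathbf{x}_n := (a_1(n)\alpha_1^n, \ldots, a_t(n)\alpha_t^n) \in K^t$ using the linear forms $L_{v_0,1}(X) = X_1 + \cdots + X_t$ and $L_{v_0,i}(X) = X_i$ for $i \geq 2$ at $v_0$, and $L_{v,i}(X) = X_i$ for every other $v \in S$. Since $L_{v_0,1}(\mathbf{x}_n) = G_n$, while the remaining factors are controlled by the $S$-unit property of the $\alpha_j$ (so $\prod_{v \in S} \abs{\alpha_j^n}_v = 1$ by the product formula) and by the at-most-polynomial growth of the height of $a_j(n)$, the hypothesis $\abs{G_n} < A^{n(1-\varepsilon)}$ translates into the input
\[
\prod_{v \in S} \prod_{i=1}^{t} \frac{\abs{L_{v,i}(\mathbf{x}_n)}_v}{\norm{\mathbf{x}_n}_v} \leq H(\mathbf{x}_n)^{-\delta}
\]
for some $\delta = \delta(\varepsilon) > 0$ and all sufficiently large $n \in \mathcal{N}$. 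The theorem then confines such $\mathbf{x}_n$ to finitely many proper linear subspaces of $K^t$.

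Passing to an infinite subset $\mathcal{N}' \subseteq \mathcal{N}$ gives fixed constants $c_1, \ldots, c_t \in K$, not all zero, with $H_n := \sum_j c_j a_j(n) \alpha_j^n = 0$ for every $n \in \mathcal{N}'$. Because the functions $n \mapsto n^k \alpha_j^n$ are linearly independent over $K$ whenever the $\alpha_j$ are distinct, $H_n$ cannot vanish identically without forcing all $c_j = 0$. But $H_n$ is itself a non-degenerate linear recurrence (its characteristic roots form a subset of the $\alpha_j$), so by the Skolem--Mahler--Lech theorem applied in the non-degenerate case it has only finitely many zeros---contradicting the infinitude of $\mathcal{N}'$.

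The main obstacle is the quantitative bookkeeping in the middle step: the polynomial factors $a_j(n)$ inflate $H(\mathbf{x}_n)$ by $n^{O(1)}$ and the archimedean absolute values $\abs{\alpha_j^n}_v$ at places $v \neq v_0$ must be accounted for, so the smallness of $\abs{G_n}$ has to be traded against these perturbations without exhausting the $\varepsilon$-gap. This is precisely why the theorem concludes only $\abs{G_n} \geq A^{n(1-\varepsilon)}$ and not a cleaner bound such as $\abs{G_n} \gg A^n$; once this calibration is set up, the rest is the standard Subspace-Theorem-to-vanishing-subsum-to-non-degeneracy chain.
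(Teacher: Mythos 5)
Your argument is essentially correct, but it takes a genuinely different route from the paper's. You argue by contradiction with the qualitative Subspace Theorem: the exceptional vectors $\mathbf{x}_n$ fall into finitely many proper subspaces, an infinite subfamily then satisfies a fixed nontrivial relation $\sum_j c_j a_j(n)\alpha_j^n=0$, and non-degeneracy plus Skolem--Mahler--Lech finishes the job. The paper instead argues directly: it first invokes Schmidt's zero-multiplicity bound \cite{schmidt-1999} to guarantee that no subsum of $za_1(n)\alpha_1^n+\cdots+za_t(n)\alpha_t^n$ vanishes for large $n$ --- exactly the hypothesis of Evertse's quantitative inequality for sums of $S$-units (Theorem 2 of \cite{evertse-1984}) --- and then combines that inequality with Evertse's lemma bounding $\prod_{\nu\in S}\norm{a_j(n)}_{\nu}$ by $n^{O(1)}$ to obtain an explicit lower bound $\abs{G_n}\geq C\, n^{-tDm-m\varepsilon}\abs{\alpha_1}^{n(1-\varepsilon-\delta)}$ valid for all large $n$ in a single pass, with no subsequence extraction. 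The two proofs use the zero-multiplicity input in opposite ways (as a hypothesis check up front versus as the source of the final contradiction), and the paper's version keeps the whole estimate quantitative, though both are ineffective since the constants coming from the Subspace Theorem are. Your closing remarks on calibrating $\varepsilon$ against the conjugates $\sigma_i(\alpha_j)$ match the paper's decomposition $\varepsilon=\gamma\delta$ with $A^{\gamma}\leq\abs{\alpha_1}$.

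One technical point to repair: your displayed Subspace-Theorem input mixes two normalizations. With the factors $\norm{\mathbf{x}_n}_v$ in the denominators, the correct threshold is $H(\mathbf{x}_n)^{-t-\delta}$; the condition $\leq H(\mathbf{x}_n)^{-\delta}$ in that normalization is met by Zariski-dense families of $S$-integer points (already $(1,N)\in\ZZ^2$ gives a counterexample), so the statement you are implicitly invoking is false as written. Either restore the exponent $-t-\delta$, or drop the denominators and use the form $\prod_{v\in S}\prod_i\abs{L_{v,i}(\mathbf{x}_n)}_v\leq H(\mathbf{x}_n)^{-\delta}$ for $\mathbf{x}_n\in\mathcal{O}_S^t$. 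Your estimate $A^{n\varepsilon}\gg H(\mathbf{x}_n)^{\delta}$ for small $\delta$ goes through with either corrected form, so this is a fixable slip rather than a structural gap.
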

	
	From here on $ K $ will denote a number field.
	In the proof we will need three auxiliary results which are listed below. The first one is a result of Schmidt and can be found in \cite{schmidt-1999}:
	\begin{mythm}
		\label{p3-thm:zerobound}
		Suppose that $ (G_n)_{n \in \ZZ} $ is a non-degenerate linear recurrence sequence of complex numbers, whose characteristic polynomial has $ k $ distinct roots of multiplicity $ \leq a $. Then the number of solutions $ n \in \ZZ $ of the equation
		\begin{equation*}
			G_n = 0
		\end{equation*}
		can be bounded above by
		\begin{equation*}
			c(k,a) = e^{(7k^a)^{8k^a}}.
		\end{equation*}
	\end{mythm}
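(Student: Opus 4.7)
The plan is to deduce this zero-counting bound from the quantitative Subspace Theorem, in the form of the Evertse-Schlickewei-Schmidt bound on non-degenerate solutions of $ S $-unit equations, together with an induction on the number of terms that absorbs degenerate solutions. First, expand the recurrence as $ G_n = \sum_{j=1}^{k} \sum_{m=0}^{a-1} c_{j,m} n^m \alpha_j^n $, a polynomial-exponential sum with at most $ T := ka $ monomials. Pass to a number field $ L $ containing all $ \alpha_j $ and all $ c_{j,m} $, and pick a finite set of places $ S $ containing the archimedean places together with all places at which any $ \alpha_j $ or $ c_{j,m} $ fails to be a unit; then $ |S| $ is bounded in terms of $ k $ and $ a $, and all $ \alpha_j $ and $ c_{j,m} $ are $ S $-units of $ L $.

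Second, I would classify each solution $ n $ of $ G_n = 0 $ by the unique minimal vanishing subsum among its $ T $ monomial terms. There are at most $ 2^T $ such minimal patterns, so it suffices to bound, for each fixed index set $ I \subseteq \{1,\ldots,k\} \times \{0,\ldots,a-1\} $, the number of $ n $ at which $ \sum_{(j,m) \in I} c_{j,m} n^m \alpha_j^n = 0 $ holds with no proper subsum vanishing. Fix one distinguished index $ (j_0,m_0) \in I $, divide through, and obtain the ``unit equation in disguise''
\begin{equation*}
\sum_{(j,m) \in I \setminus \{(j_0,m_0)\}} \frac{c_{j,m}}{c_{j_0,m_0}} \, n^{m-m_0} \left( \frac{\alpha_j}{\alpha_{j_0}} \right)^{n} = -1,
\end{equation*}
in which no proper subsum on the left equals zero and, by the non-degeneracy hypothesis, the bases $ \alpha_j/\alpha_{j_0} $ for $ j \neq j_0 $ are not roots of unity.

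Third, I would apply the quantitative Subspace Theorem of Evertse-Schlickewei-Schmidt, which bounds the number of non-degenerate solutions of a unit equation in $ T-1 $ variables over $ S $ by a constant of the form $ A^{|S|} $ with $ A $ polynomial in the number of variables. Distinct $ n $ give distinct projective points because no $ \alpha_j/\alpha_{j_0} $ is a root of unity. Summing over the at most $ 2^T $ patterns and tracking the dependence on $ T = k a $ and $ |S| $ through the explicit Evertse-Schlickewei-Schmidt estimate, one recovers an iterated-exponential bound which, after choosing constants generously, takes the form $ c(k,a) = e^{(7k^a)^{8k^a}} $.

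The main obstacle is that the monomials $ n^{m-m_0}(\alpha_j/\alpha_{j_0})^{n} $ appearing in the displayed equation are not themselves $ S $-units, because of the polynomial factor $ n^{m-m_0} $. This must be circumvented either by invoking a polynomial-exponential refinement of the Subspace Theorem (treating $ n^{m-m_0} $ as an auxiliary monomial in the linear form), or by the standard device of absorbing $ n $ into the exponent via a specialization argument, and it is here that most of the numerical bookkeeping needed to reach the explicit constant $ e^{(7k^a)^{8k^a}} $ takes place; handling any residual degeneracies where a proper subsum vanishes is then a straightforward induction on the number of terms using the same machinery on shorter recurrences.
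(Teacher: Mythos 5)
The paper does not prove this statement at all: it is imported verbatim as a known result of Schmidt, with a pointer to \cite{schmidt-1999}, and is used as a black box in the appendix. So there is no in-paper proof to compare against; the only question is whether your sketch would actually establish the theorem. It would not, for two concrete reasons.

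First, the uniformity of the constant. The whole point of the statement is that the bound $c(k,a) = e^{(7k^a)^{8k^a}}$ depends \emph{only} on $k$ and $a$ --- not on the degree of the number field $L$, not on the heights of the $\alpha_j$ or $c_{j,m}$, and not on how many places are needed to make them all units. Your claim that ``$\abs{S}$ is bounded in terms of $k$ and $a$'' is false: the set of places at which the $\alpha_j$ and $c_{j,m}$ fail to be units depends on the arithmetic of those particular numbers, and can be arbitrarily large for fixed $k$ and $a$. Consequently any bound of the shape $A^{\abs{S}}$ coming from the quantitative Subspace Theorem in its $S$-unit form does not yield a constant of the required type. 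To get uniformity one needs bounds for unit equations that are independent of $S$ and of the field (in the simple-root case these come from the later Evertse--Schlickewei--Schmidt theorem on linear equations with unknowns in a multiplicative group of bounded rank), and your sketch does not invoke, or substitute for, that input.

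Second, and more seriously, the polynomial coefficients. You correctly identify that the monomials $n^{m-m_0}(\alpha_j/\alpha_{j_0})^n$ are not $S$-units, but you then defer the resolution to ``a polynomial-exponential refinement of the Subspace Theorem'' or ``a specialization argument'' plus ``numerical bookkeeping.'' This deferred step is not bookkeeping: it is the entire content of Schmidt's theorem. The case $a=1$ was understood much earlier; the passage from purely exponential sums to polynomial-exponential sums with a bound depending only on $k$ and $a$ is the celebrated achievement of \cite{schmidt-1999}, whose proof occupies some forty pages and proceeds by partitioning $\ZZ$ into residue classes, a delicate multi-level induction, and a specialization machinery that is anything but standard. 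As written, your argument reduces the theorem to an unproved statement at least as deep as the theorem itself. (A minor additional point: a solution $n$ need not have a \emph{unique} minimal vanishing subsum; one should instead partition the index set into vanishing subsums, which costs a partition count rather than $2^T$ --- this is easily repaired, unlike the two gaps above.)
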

	The second one is a result of Evertse. The reader will find it as Theorem 2 in \cite{evertse-1984}. We use the notation
	\begin{equation*}
		\norm{\xbf} = \max_{\substack{k=0,\ldots,t \\ i=1,\ldots,D}} \abs{\sigma_i(x_k)}
	\end{equation*}
	with $ \set{\sigma_1, \ldots, \sigma_D} $ the set of all embedings of $ K $ in $ \CC $ and $ \xbf = (x_0,x_1,\ldots,x_t) $.
	Moreover, we denote by $ \Oc_K $ the ring of integers in $ K $:
	\begin{mythm}
		\label{p3-thm:valuationprodineq}
		Let $ t $ be a non-negative integer and $ S $ a finite set of places in $ K $, containing all infinite places. Then for every $ \varepsilon > 0 $ a constant $ C $ exists, depending only on $ \varepsilon, S, K, t $ such that for each non-empty subset $ T $ of $ S $ and every vector $ \xbf = (x_0,x_1,\ldots,x_t) \in \Oc_K^{t+1} $ with
		\begin{equation*}
			x_{i_0} + x_{i_1} + \cdots + x_{i_s} \neq 0
		\end{equation*}
		for each non-empty subset $ \set{i_0,i_1,\ldots,i_s} $ of $ \set{0,1,\ldots,t} $ the inequality
		\begin{equation*}
			\left( \prod_{k=0}^{t} \prod_{\nu \in S} \norm{x_k}_{\nu} \right) \prod_{\nu \in T} \norm{x_0 + x_1 + \cdots + x_t}_{\nu} \geq C \left( \prod_{\nu \in T} \max_{k=0,\ldots,t} \norm{x_k}_{\nu} \right) \norm{\xbf}^{-\varepsilon}.
		\end{equation*}
		is valid.
	\end{mythm}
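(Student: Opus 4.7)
The plan is an induction on $t$ driven by W. M. Schmidt's subspace theorem in its product form over number fields (as refined by Schlickewei and by Evertse). The base case $t=0$ is immediate: the two sides of the asserted inequality agree up to a multiplicative constant, which can be absorbed into $C$.

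For the inductive step, assume the statement for tuples of length at most $t$, and suppose for contradiction that it fails for some fixed $T \subseteq S$ with infinitely many witnesses $\xbf^{(m)} = (x_0^{(m)}, \ldots, x_t^{(m)}) \in \Oc_K^{t+1}$ satisfying the non-vanishing-subsum hypothesis. The key move is to feed into the subspace theorem the system of $(t+1)|S|$ linear forms on $K^{t+1}$ indexed by $(k,\nu) \in \set{0,\ldots,t} \times S$ and given by
\begin{equation*}
L_{0,\nu}(X_0,\ldots,X_t) = X_0 + X_1 + \cdots + X_t \text{ for } \nu \in T, \qquad L_{k,\nu}(X_0,\ldots,X_t) = X_k \text{ otherwise}.
\end{equation*}
Within each fixed $\nu$-block these forms are linearly independent, and the hypothetical failure of the Evertse inequality translates, after a short algebraic manipulation involving a slightly smaller $\varepsilon' < \varepsilon$, exactly into the smallness hypothesis of the product-form subspace theorem. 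Hence the witnesses $\xbf^{(m)}$ lie in a finite union of proper linear subspaces of $K^{t+1}$.

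I would then pass to an infinite sub-family lying in a single proper subspace, producing a fixed non-trivial linear relation $\sum_{k \in I} \lambda_k x_k^{(m)} = 0$ with $\lambda_k \in K^*$, $|I| \geq 2$. Use this relation to eliminate one coordinate and rewrite $x_0^{(m)} + \cdots + x_t^{(m)}$ as a sum of at most $t$ elements of $\Oc_K$ (after clearing denominators in the $\lambda_k$). The non-vanishing of every subsum of the original tuple is engineered precisely so that, after grouping the indices linked by the relation into combined variables, the reduced tuple still satisfies the analogous non-vanishing condition with strictly fewer summands. The inductive hypothesis then supplies the Evertse bound for the reduced tuple, and translating back through the linear relation yields the sought inequality (with a new admissible constant) for the original tuple, contradicting the presumed failure and closing the induction.

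The main obstacle is the combinatorial bookkeeping in the reduction step: one must check that the no-vanishing-subsum condition survives the elimination of a variable, possibly at the cost of a finite case split on which minimal dependent subset $I$ occurs, and that the new constants delivered by the inductive hypothesis can still be chosen uniformly in $T \subseteq S$ as the theorem requires. All of the analytic content of the proof is concentrated in the single application of the subspace theorem; the rest is algebraic and combinatorial housekeeping.
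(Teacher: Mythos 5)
The paper does not prove this statement at all: it is quoted verbatim as Theorem~2 of Evertse \cite{evertse-1984} and used as a black box in the appendix, so there is no internal proof to compare against. Your sketch is essentially a reconstruction of Evertse's own strategy (induction on $t$ driven by the product/$p$-adic form of the subspace theorem), which is the right idea, but as written it has two concrete gaps. First, your choice of linear forms does not match the quantity you need to be small. With $L_{0,\nu}=X_0+\cdots+X_t$ for $\nu\in T$ and $\sigma=x_0+\cdots+x_t$, the product over your system is
\begin{equation*}
\prod_{\nu\in S}\prod_{k=0}^{t}\norm{L_{k,\nu}(\xbf)}_{\nu}=\Bigl(\prod_{k=0}^{t}\prod_{\nu\in S}\norm{x_k}_{\nu}\Bigr)\prod_{\nu\in T}\frac{\norm{\sigma}_{\nu}}{\norm{x_0}_{\nu}},
\end{equation*}
whereas the hypothetical failure of the asserted inequality only makes the analogous expression with $\max_{k}\norm{x_k}_{\nu}$ in place of $\norm{x_0}_{\nu}$ small; since $\norm{x_0}_{\nu}\leq\max_k\norm{x_k}_{\nu}$, the smallness does not transfer to your system. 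You must, for each $\nu\in T$, replace the coordinate form of the index $k(\nu)$ realizing $\max_k\norm{x_k}_{\nu}$, and first partition the putative infinite family of counterexamples according to the finitely many functions $\nu\mapsto k(\nu)$ (and also according to $T\subseteq S$) before invoking the subspace theorem. This is standard but is not what you wrote.

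Second, and more seriously, the inductive reduction does not close as described. Eliminating $x_{k_0}$ via $\sum_{k\in I}\lambda_k x_k=0$ turns the sum into $\sum_{k\neq k_0}c_k x_k$ with $c_k=1-\lambda_k/\lambda_{k_0}$ for $k\in I\setminus\set{k_0}$ and $c_k=1$ otherwise; a vanishing subsum of the reduced tuple is a relation $\sum_{k\in J}c_k x_k=0$ with coefficients other than $1$, which is \emph{not} a subsum of the original tuple, so the no-vanishing-subsum hypothesis does not ``survive'' automatically, and indeed it can fail. Repairing this requires an additional device --- e.g.\ choosing the linear relation with minimal support so that no proper sub-relation exists, applying the inductive hypothesis to the tuple $(\lambda_k x_k)_{k\in I\setminus\set{k_0}}$ whose sum is $-\lambda_{k_0}x_{k_0}$, and/or a further finite descent over minimal vanishing subsums of the reduced tuple --- together with a check that the constants remain uniform over the finitely many subspaces and the finitely many cases. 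You explicitly defer exactly this point as ``combinatorial bookkeeping,'' but it is the substantive part of Evertse's argument; until it is carried out the induction is not established. Given that the present paper's role for this theorem is purely as a cited input, the appropriate move here is to cite \cite{evertse-1984} rather than to reprove it.
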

	Furthermore, we will need the following lemma which also can be found in \cite{evertse-1984}:
	\begin{mylemma}
		\label{p3-lemma:valprodbound}
		Suppose $ K $ is a number field of degree $ D $, let $ f(X) \in K[X] $ be a polynomial of degree $ m $ and $ T $ a non-empty set of primes on $ K $. Then there exists a positive constant $ c $, depending only on $ K,f $ such that for all $ r \in \ZZ $ with $ r \neq 0 $ and $ f(r) \neq 0 $ it holds that
		\begin{align*}
			c^{-1} \abs{r}^{-Dm} &\leq \left( \prod_{\nu} \max \setb{1, \norm{f(r)}_{\nu}} \right)^{-1} \leq \prod_{\nu \in T} \norm{f(r)}_{\nu} \\
			&\leq \prod_{\nu} \max \setb{1, \norm{f(r)}_{\nu}} \leq c \abs{r}^{Dm}.
		\end{align*}
	\end{mylemma}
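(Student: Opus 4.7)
The plan is to first establish the outer bound $\prod_{\nu} \max\setb{1, \norm{f(r)}_{\nu}} \leq c \abs{r}^{Dm}$ by a local analysis, and then to derive everything else from this bound together with the product formula.

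Write $f(X) = \sum_{i=0}^{m} c_i X^i$ with $c_i \in K$. At an archimedean place $\nu$ corresponding to an embedding $\sigma_\nu : K \hookrightarrow \CC$ with local degree $d_\nu = [K_\nu : \QQ_\nu]$, the triangle inequality gives $\abs{\sigma_\nu(f(r))} \leq (m+1)\max_i \abs{\sigma_\nu(c_i)} \cdot \abs{r}^m$, hence $\norm{f(r)}_\nu \leq \bigl((m+1)\max_i \abs{\sigma_\nu(c_i)}\bigr)^{d_\nu} \abs{r}^{d_\nu m}$. At a non-archimedean $\nu$, the ultrametric inequality and $\norm{r}_\nu \leq 1$ give $\norm{f(r)}_\nu \leq \max_i \norm{c_i}_\nu$, and this quantity equals $1$ for all but finitely many $\nu$. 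Taking the product over all places and using the identity $\sum_{\nu \mid \infty} d_\nu = D$ to collect the powers of $\abs{r}$, I obtain a constant $c = c(K,f)$ such that $\prod_\nu \max\setb{1,\norm{f(r)}_\nu} \leq c \abs{r}^{Dm}$, which is the rightmost inequality. Taking reciprocals yields the leftmost inequality $c^{-1}\abs{r}^{-Dm} \leq \bigl(\prod_\nu \max\setb{1,\norm{f(r)}_\nu}\bigr)^{-1}$.

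For the two middle inequalities, the key observation is that for every $x > 0$ one has $\max\setb{1,x}\min\setb{1,x} = x$, so since $f(r) \neq 0$ the product formula $\prod_\nu \norm{f(r)}_\nu = 1$ yields
\begin{equation*}
	\left( \prod_{\nu} \max \setb{1, \norm{f(r)}_{\nu}} \right)^{-1} = \prod_\nu \min\setb{1,\norm{f(r)}_\nu}.
\end{equation*}
Because each factor $\min\setb{1,\norm{f(r)}_\nu}$ is $\leq 1$, restricting to $T$ can only increase the product, giving $\prod_\nu \min\setb{1,\norm{f(r)}_\nu} \leq \prod_{\nu \in T} \min\setb{1,\norm{f(r)}_\nu} \leq \prod_{\nu \in T} \norm{f(r)}_\nu$, which is the first middle inequality. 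Symmetrically, $\prod_{\nu \in T} \norm{f(r)}_\nu \leq \prod_{\nu \in T} \max\setb{1,\norm{f(r)}_\nu} \leq \prod_\nu \max\setb{1,\norm{f(r)}_\nu}$ gives the second.

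There is no real obstacle here; the only care needed is in the bookkeeping with the two types of places and in verifying that the coefficient-dependent quantities $\max_i \norm{c_i}_\nu$ are bounded uniformly in $\nu$ and equal $1$ for almost all $\nu$, so that they can be absorbed into the constant $c = c(K,f)$. The hypothesis $r \neq 0$ enters only to ensure $\abs{r} \geq 1$ (so that $\abs{r}^i \leq \abs{r}^m$ in the archimedean estimate), while $f(r) \neq 0$ is needed exclusively to apply the product formula.
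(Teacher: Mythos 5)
Your proof is correct. The paper does not actually prove this lemma — it is quoted from Evertse's 1984 paper — and your argument is essentially the standard one behind that reference: the outer bounds are the elementary height estimate $\prod_{\nu} \max \setb{1, \norm{f(r)}_{\nu}} \leq c\,\abs{r}^{Dm}$ obtained place by place (triangle inequality at the archimedean places, ultrametric inequality and $\norm{r}_\nu \leq 1$ at the finite ones), and the two middle inequalities follow from the product formula via the identity $\left(\prod_{\nu}\max\setb{1,\norm{f(r)}_\nu}\right)^{-1} = \prod_{\nu}\min\setb{1,\norm{f(r)}_\nu}$ together with the monotonicity of these products under restriction to $T$. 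The only bookkeeping point worth making explicit is the step $\max\setb{1,AB} \leq \max\setb{1,A}\cdot B$ for $B \geq 1$ when passing from the bound on $\norm{f(r)}_\nu$ to the bound on $\max\setb{1,\norm{f(r)}_\nu}$ at the archimedean places, which you correctly flag as requiring $\abs{r}\geq 1$, i.e.\ $r \neq 0$.
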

	
	\begin{proof}[Proof of Theorem \ref{p3-thm:numfieldcase}]
		Since the characteristic roots $ \alpha_j $ of $ G_n $ are algebraic integers we can find a nonzero integer $ z $ such that $ z a_j(n) \alpha_j^n $ are algebraic integers for all $ j=1,\ldots,t $ and all $ n \in \NN $.
		Denote by $ L $ the splitting field of the characteristic polynomial of the sequence $ G_n $, i.e. $ L = K(\alpha_1,\ldots,\alpha_t) $.
		Choose $ S $ as a finite set of places in $ L $ containing all infinite places as well as all places such that $ \alpha_1, \ldots, \alpha_t $ are $ S $-units.
		Let $ \mu $ be such that $ \norm{\cdot}_{\mu} = \abs{\cdot} $ is the usual absolute value on $ \CC $. In particular we have $ \mu \in S $. Further define $ T = \set{\mu} $.
		
		As $ G_n $ is non-degenerate the sequence $ \widetilde{G_n} = zG_n $ is non-degenerate, too. Therefore applying Theorem \ref{p3-thm:zerobound} yields for $ n $ large enough
		\begin{equation*}
			z a_{j_1}(n) \alpha_{j_1}^n + \cdots + z a_{j_s}(n) \alpha_{j_s}^n \neq 0
		\end{equation*}
		for each non-empty subset $ \set{j_1,\ldots,j_s} $ of $ \set{1,\ldots,t} $.
		Thus we can apply Theorem \ref{p3-thm:valuationprodineq} and get
		\begin{equation*}
			\left( \prod_{j=1}^{t} \prod_{\nu \in S} \norm{z a_j(n) \alpha_j^n}_{\nu} \right) \abs{zG_n} \geq C \max_{j=1,\ldots,t} \abs{z a_j(n) \alpha_j^n} \norm{z\xbf}^{-\varepsilon}
		\end{equation*}
		for $ \xbf = (a_1(n) \alpha_1^n, \ldots, a_t(n) \alpha_t^n) $.
		Assuming without loss of generality that $ \abs{\alpha_1} = \max_{j=1,\ldots,t} \abs{\alpha_j} $ and using that $ z $ is a fixed integer as well as that the $ \alpha_j $ are $ S $-units, we can rewrite this as
		\begin{align}
			\left( \prod_{j=1}^{t} \prod_{\nu \in S} \norm{a_j(n)}_{\nu} \right) \abs{G_n} &\geq C_1 \max_{j=1,\ldots,t} \abs{a_j(n) \alpha_j^n} \norm{\xbf}^{-\varepsilon} \nonumber \\
			&\label{p3-eq:appproof1}\geq C_1 \abs{a_1(n) \alpha_1^n} \norm{\xbf}^{-\varepsilon} \\
			&= C_1 \abs{a_1(n)} \abs{\alpha_1}^n \norm{\xbf}^{-\varepsilon}. \nonumber
		\end{align}
		
		In preparation for the next step note that there exists a positive constant $ A $ such that
		\begin{equation*}
			\max_{\substack{j=1,\ldots,t \\ i=1,\ldots,D}} \abs{\sigma_i(\alpha_j)} \leq A \cdot \abs{\alpha_1}.
		\end{equation*}
		We decompose $ \varepsilon = \gamma \cdot \delta $ with small $ \delta $ and $ A^{\gamma} \leq \abs{\alpha_1} $.
		Using this we get the estimates
		\begin{align*}
			\norm{\xbf} &= \max_{\substack{j=1,\ldots,t \\ i=1,\ldots,D}} \abs{\sigma_i(a_j(n) \alpha_j^n)} = \max_{\substack{j=1,\ldots,t \\ i=1,\ldots,D}} \abs{\sigma_i(a_j(n)) \sigma_i(\alpha_j)^n} \\
			&\leq \max_{\substack{j=1,\ldots,t \\ i=1,\ldots,D}} \abs{\sigma_i(a_j(n))} \cdot \max_{\substack{j=1,\ldots,t \\ i=1,\ldots,D}} \abs{\sigma_i(\alpha_j)}^n \\
			&\leq C_2 n^m \cdot \max_{\substack{j=1,\ldots,t \\ i=1,\ldots,D}} \abs{\sigma_i(\alpha_j)}^n \leq C_2 n^m A^n \abs{\alpha_1}^n
		\end{align*}
		with $ m = \max_{j=1,\ldots,t} \deg a_j $, and
		\begin{equation*}
			\norm{\xbf}^{\varepsilon} \leq C_3 n^{m\varepsilon} A^{\gamma n\delta} \abs{\alpha_1}^{n\varepsilon} \leq C_3 n^{m\varepsilon} \abs{\alpha_1}^{n(\varepsilon+\delta)}.
		\end{equation*}
		Now we insert this into inequality \eqref{p3-eq:appproof1} which gives us
		\begin{align*}
			\left( \prod_{j=1}^{t} \prod_{\nu \in S} \norm{a_j(n)}_{\nu} \right) \abs{G_n} &\geq C_4 \abs{a_1(n)} \abs{\alpha_1}^n n^{-m\varepsilon} \abs{\alpha_1}^{-n(\varepsilon+\delta)} \\
			&\geq C_5 n^{-m\varepsilon} \abs{\alpha_1}^{n(1-\varepsilon-\delta)}.
		\end{align*}
		Applying Lemma \ref{p3-lemma:valprodbound} to the product in the brackets on the left hand side gives the bound
		\begin{equation*}
			\prod_{j=1}^{t} \prod_{\nu \in S} \norm{a_j(n)}_{\nu} \leq \prod_{j=1}^{t} C_6^{(j)} n^{Dm} \leq C_7 n^{tDm}.
		\end{equation*}
		Altogether for $ n $ large enough the inequality
		\begin{equation*}
			\abs{G_n} \geq C_8 n^{-tDm-m\varepsilon} \abs{\alpha_1}^{n(1-\varepsilon-\delta)}
		\end{equation*}
		holds.
		Hence, recalling $ \abs{\alpha_1} = \max_{j=1,\ldots,t} \abs{\alpha_j} $, for $ n $ large enough we get
		\begin{equation*}
			\abs{G_n} \geq \left( \max_{j=1,\ldots,t} \abs{\alpha_j} \right)^{n(1-\widetilde{\varepsilon})}.
		\end{equation*}
		This proves the theorem.
	\end{proof}

\end{document}